\def\th@remark{%
  \normalfont % body font
  \thm@headfont{\bfseries} % <-- bold heading
  \thm@notefont{\bfseries}%
}
\theoremstyle{remark}
\newtheorem*{remark}{Remark}
\newtheorem{theorem}{Theorem}
\begin{document}
	
	\title{Construction of the Nearest Nonnegative Hankel Matrix for a Prescribed Eigenpair}
	\author{
		Prince Kanhya\thanks{Corresponding Author: Postdoctoral Fellow, IIT Guwahati, Assam, India. Email: \texttt{pkanhya77@gmail.com}} 
		\and 
		Udit Raj\thanks{Ph.D., Shiv Nadar Institution of Eminence, Deemed to be University, Delhi-NCR, India. Email: \texttt{ur376@snu.edu.in}} 
	}
	\date{\today}
	\maketitle
\begin{abstract}
 We study the problem of determining whether a prescribed eigenpair $(\lambda,x)$
	can be made an exact eigenpair of a nonnegative Hankel matrix through the smallest
	possible structured perturbation. The task reduces to check the feasibility of a
	set of linear constraints that encode both the Hankel structure and entrywise
	nonnegativity. When the feasibility set is nonempty, we compute the minimum-norm
	perturbation $\Delta H$ such that $(H+\Delta H)x=\lambda x$. When no such perturbation
	exists, we compute the nearest nonnegative Hankel matrix in a residual sense by
	minimizing $\|(H+\Delta H)x-\lambda x\|_{2}$ subject to the imposed constraints.
	Because closed--form formulas for the structured backward error are generally
	unavailable, our method provides a fully numerical and optimization-based
	framework for evaluating eigenpair sensitivity under nonnegativity-preserving
	Hankel perturbations. Numerical examples illustrate both feasible and infeasible cases.
\end{abstract}

\textbf{Keywords:}
 Hankel matrices; Eigenpair; Structured perturbation; Nonnegative matrix; Structured eigenvalue problems; Backward error analysis;

	\section{Introduction}
	
	Hankel matrices arise naturally in a wide range of mathematical and engineering applications, including moment problems, signal processing, control theory, time-series analysis, system identification, and low-rank approximation of structured data. A matrix $H \in \mathbb{C}^{n \times n}$ is called a Hankel matrix if its entries remain constant along each anti-diagonal, i.e., $H(i,j) = h_{i+j-1}$. Owing to this structural constraint, the space of $n \times n$ Hankel matrices is a $(2n-1)$-dimensional linear subspace of $\mathbb{C}^{n \times n},$  see \cite{ChuGolub2005, Park1999,AlHomidan2007,Knirsch2020} for more information. In many applications, an additional positivity requirement is imposed: for instance, in classical and truncated moment problems, the associated Hankel matrix must be positive semidefinite; in spectral estimation and power spectral density (PSD) signal processing, covariance Hankel matrices are often required to be entrywise nonnegative or to preserve certain positivity patterns, see \cite{AlHomidan2007, Cifuentes2019, RanTeng2024, AlHomidan2003,Beckermann2007} for further work on Hankel matrices and nonnegativity structures.
 Such settings naturally motivate the study of nonnegativity-preserving structured perturbations, where only Hankel matrices with nonnegative entries are admissible.
	
	The numerical analysis of eigenpairs of structured matrices has a long history. Classical backward error results for unstructured eigenvalue problems date back to Wilkinson and Stewart and have been extended to structured settings including Toeplitz, Hankel, circulant, and palindromic structures \cite{ChuGolub2005, Park1999, Knirsch2020,AhmadKanhya2021, AlHomidan2021}. More recently, structured backward errors have been studied for polynomial eigenvalue problems, rank-constrained approximations, and matrix pencils with symmetries or convex cone constraints \cite{Cifuentes2019, Guglielmi2018,AhmadKanhya2020}. However, for Hankel matrices with entrywise nonnegativity constraints, the analysis becomes significantly more subtle: the feasible perturbations belong simultaneously to a linear subspace (Hankel structure) and a convex cone (nonnegativity), resulting in a nontrivial intersection geometry. To our knowledge, there is no closed-form expression for the smallest nonnegative Hankel perturbation that makes a given vector an exact eigenvector, for more information on Hankel matrices and related problems, see \cite{ChuGolub2005, Park1999, AlHomidan2007, Knirsch2020, Cifuentes2019} and the references therein.

The motivation for this study is to understand how small, structured perturbations can restore feasibility in Hankel matrices arising from approximate data while preserving physical or mathematical constraints. In applications such as moment sequence reconstruction, PSD signal processing, and low-rank Hankel approximations, one often encounters Hankel matrices that are nearly, but not exactly, positive semidefinite or nonnegative, see \cite{ ChuGolub2005, Park1999, Knirsch2020, AlHomidan2003} for more on various applications. The eigenpairs of such matrices encode essential information, including frequencies, vibration modes, and system dynamics, making their accurate computation critical. When the prescribed eigenpair cannot be realized exactly by a nonnegative Hankel matrix, a residual-minimizing adjustment provides the closest feasible solution and quantifies the structured backward error, see  \cite{Dopico2019, ShanWei2025,AhmadKanhya20211, AlHomidan2005,Fazzi2021} for related works. Advances in convex optimization, structured least-squares methods, and large-scale algorithms now make it possible to compute these corrections efficiently for moderately sized problems.

Further, it is natural to ask whether the underlying feasibility problem is always solvable. The answer is negative, even in very simple situations. Consider the Hankel matrix $H=0\in\mathbb{R}^{3\times 3}$, the scalar $\lambda=-1$, and the vector $x=[1,1,1]^T$. Suppose that a nonnegative Hankel matrix $\widehat H\ge 0$ exists such that $\widehat H x=\lambda x$. Since $\widehat H$ is entrywise nonnegative, the Perron--Frobenius theory implies that its spectral radius is a nonnegative eigenvalue with a nonnegative eigenvector. Hence $\widehat H$ cannot admit a negative eigenvalue with a strictly positive eigenvector, contradicting $\widehat H x = -x$ with $x>0$. 

For completeness we also give a direct, elementary contradiction. Any $3\times 3$ Hankel matrix has the form
\[
\widehat H=\begin{bmatrix}
h_1 & h_2 & h_3\\[1ex]
h_2 & h_3 & h_4\\[1ex]
h_3 & h_4 & h_5
\end{bmatrix},
\qquad h_i\ge 0.
\]
The eigenpair equation $\widehat H x=\lambda x$ with $\lambda=-1$ and $x=[1,1,1]^T$ gives
\[
(h_1+h_2+h_3,\; h_2+h_3+h_4,\; h_3+h_4+h_5)^T = (-1,-1,-1)^T.
\]
Each component on the left is a sum of nonnegative numbers, hence is nonnegative, whereas the right-hand side components are strictly negative. This is impossible. Therefore no nonnegative Hankel perturbation $\Delta H$ exists such that $(H+\Delta H)x=\lambda x$ for this prescribed eigenpair. This example demonstrates that the feasibility problem for exact eigenpair realization may fail, motivating the need for a residual-minimizing correction framework when no exact nonnegative Hankel matrix satisfies the prescribed eigenpair constraints.

\section{Problem Statement}
In this work, we mainly study the following problem: 
Given an arbitrary Hankel matrix $H \in \mathbb{R}^{n\times n}$, we seek the
nearest nonnegative Hankel matrix $\widehat H$ such that a prescribed
eigenpair $(\lambda,x)\in\mathbb{C}\times\mathbb{C}^n$ is (exactly or
approximately) preserved. Formally, we consider the optimization problem
\[
\min_{\Delta H}\;\|\Delta H\|_F
\quad\text{subject to}\quad 
\widehat H := H+\Delta H \ge 0,\;\;
\widehat H\ \text{Hankel}.
\]
When exact feasibility is possible, the goal is to find the minimum--norm
Hankel perturbation $\Delta H$ producing a nonnegative matrix $\widehat H$
satisfying
\[
\widehat H x = \lambda x.
\]
If exact feasibility is impossible under the nonnegativity and Hankel
constraints, we instead compute the nearest nonnegative Hankel matrix
$\widehat H$ minimizing the eigenpair residual
\[
\|\widehat H x - \lambda x\|_2.
\]
This leads to two structured convex optimization problems corresponding to
the exact and inexact correction stages of the algorithm, and solvable via modern convex optimization technique.
%\cite{Cifuentes2019}.
    %Given a Hankel matrix $H \in \mathbb{R}^{n \times n}$, an eigenvalue $\lambda \in \mathbb{R}$, and a candidate eigenvector $x \neq 0$, determine whether there exists a nonnegative Hankel matrix $\widehat{H}$ such that $(\lambda, x)$ is an exact eigenpair of $\widehat{H}$. If such a perturbation exists, we characterize the set of all admissible nonnegative Hankel perturbations and compute the one of minimum Frobenius norm, which represents the structured backward error for the full eigenpair $(\lambda, x)$ \cite{Johnson2017, Guglielmi2018}. When feasibility fails—that is, when no nonnegative Hankel matrix admits $(\lambda, x)$ as an eigenpair—we instead compute the nearest nonnegative Hankel matrix in the sense of minimizing the residual $\|(\widehat{H} - \lambda I)x\|_2$.
    
    %This leads to a constrained least-squares problem defined on the intersection of a linear space with thenonnegativity cone, 
 
	Unlike classical unstructured formulas, there is no closed-form expression for the minimum nonnegative Hankel perturbation that satisfies the exact eigenpair constraint. Hence, our strategy is strictly numerical: we cast the feasibility problem as a linear program and when feasible, obtain the minimal Frobenius-norm solution via a constrained quadratic program. When infeasible, we solve a cone-constrained optimization problem to compute the nearest admissible matrix producing the smallest eigenpair residual. This unified computational framework hence yields a practical and theoretically motivated tool for the study of eigenvalue sensitivity under nonnegativity-preserving Hankel perturbations, both for exact and inexact settings.
	
	The contributions of the paper can be summarized as follows. First, we formulate the nonnegativity-constrained Hankel eigenpair backward error problem in a linear–convex framework. Second, we derive feasibility conditions and express the general form of admissible Hankel perturbations using a structure matrix $S$ that parameterizes all Hankel matrices. Third, we propose numerical algorithms based on linear programming and constrained least squares to compute either the exact structured backward error or the nearest admissible matrix when feasibility fails. Finally, we demonstrate the effectiveness and versatility of the proposed approach through numerical experiments. Two representative examples are presented: one in which the prescribed eigenpair is exactly realizable by a nonnegative Hankel matrix, and another where exact realization is impossible but residual-minimizing solution is obtained. Complementing these case studies, we also present a selection of graphical results that describe the performance of the algorithm: these include comparisons between the minimum-norm and residual-minimizing solutions, computation time as a function of matrix size, and the distribution of residuals across multiple trials.

	\section{Preliminaries and Notations}
	
	In this section, we briefly summarize the notations, basic definitions, and structural properties used throughout the paper.
We adopt the following notational conventions. The spaces $\mathbb{R}^{n}$ and $\mathbb{C}^{n}$ denote the real and complex $n$-dimensional vector spaces, respectively. For $x = [x_1, x_2, \dots, x_n]^T \in \mathbb{C}^n$, define
$
\|x\|_2 := \sqrt{\sum_{i=1}^{n} |x_i|^2},
$
where $|x_i|$ denotes the modulus of the complex number $x_i$.
For any matrix $A= [a_{ij}] \in \mathbb{C}^{n \times m}$, the Frobenius norm is defined by $\|A\|_{F} = \sqrt{\sum_{i,j} |a_{ij}|^{2}}$. Further, $A^T$ and $A^H$ denote the transpose and conjugate transpose of a matrix $A \in \mathbb{C}^{n \times m},$ respectively. 
The vectorization operator $\operatorname{vec}(\cdot)$ stacks the columns of a matrix $A = [a_{ij}] \in \mathbb{C}^{n \times m}$ into a single vector, i.e.,
\[
\operatorname{vec}(A) = [a_{11}, a_{21}, \dots, a_{n1}, a_{12}, \dots,a_{n2},\ldots,a_{1m},\ldots, a_{nm}]^{T}.
\]

The inverse, $\operatorname{vec}^{-1}(.)$, reshapes a vector $v \in \mathbb{R}^{mn}$ back into an $m \times n$ matrix. We also use the Kronecker product, denoted by $A \otimes B$, which constructs a block matrix formed by multiplying each entry of $A$ with the matrix $B$.

\subsection{Hankel Structure Matrix}
	A matrix $H \in \mathbb{C}^{n \times n}$ is called a \emph{Hankel matrix} if its entries are constant along every anti-diagonal, i.e.,
\[
h_{ij} = c_{i+j-1},
\]
for some vector $c = (c_{1}, c_{2}, \dots, c_{2n-1})^{T}$.  
Thus every Hankel matrix is determined by its generating vector $c$.
The collection of all $n\times n$ Hankel matrices forms a linear subspace of 
$\mathbb{F}^{\,n\times n}$ of dimension $2n-1$, where $\mathbb{F}\in\{\mathbb{R},\mathbb{C}\}$. 
This follows from the fact that a Hankel matrix is uniquely specified by its 
$2n-1$ anti-diagonal entries.
 
To encode this structure linearly, we introduce the matrix
$S \in \{0,1\}^{\,n^{2} \times (2n-1)},
$
which maps the vector of anti\text{-}diagonal values to the vectorized
Hankel matrix.  
For any generating vector $c \in \mathbb{C}^{2n-1}$, the associated Hankel
matrix $H$ satisfies
\[
\operatorname{vec}(H) = S\,c.
\]

Similarly, any Hankel perturbation $\Delta H$ can be written as
\[
\operatorname{vec}(\Delta H) = S z,
\qquad z \in \mathbb{C}^{2n-1},
\]

where the matrix $S$ is defined entrywise by
\[
S_{(j-1)n + i,\; k} =
\begin{cases}
	1, & \text{if } k = i + j - 1,\\[1mm]
	0, & \text{otherwise},
\end{cases}
\qquad 1 \le i,j \le n,\; 1 \le k \le 2n-1.
\]
Thus, the $k$th column of $S$ has ones exactly at the locations belonging to the
$k$th anti\text{-}diagonal of the matrix, and zeros elsewhere.  
Consequently, $Sz$ is precisely the vectorized Hankel matrix whose
anti\text{-}diagonals are given by~$z$.
Further, a Hankel matrix $H$ is said to be \emph{nonnegative} if
	$
	h_{ij} \ge 0 \quad \text{for all } i,j.
	$
	Since the entries on each anti-diagonal are identical, the nonnegativity condition becomes
	\[
	Sz \; \ge \; -\operatorname{vec}(H),
	\]
	which is a set of linear inequality constraints in the vector $z$.
	
	A complex scalar $\lambda$ and vector $x \ne 0$ form an eigenpair of a matrix $A$ if
	\[
	A x = \lambda x.
	\]
	In our setting, we seek a perturbation $\Delta H$ such that
	\[
	(H + \Delta H)x = \lambda x.
	\tag{$\star$}
	\]
	Using vectorization and Kronecker products, this condition becomes a system of linear equalities:
	\[
	(\operatorname{vec}(\Delta H)) = S z,\qquad 
	(x^{T} \otimes I_{n})S z = \lambda x - Hx,
	\]
	which must be satisfied exactly when an exact feasible solution exists.
	
	\subsection*{Nearest Nonnegative Hankel Matrix}
	
	\textbf{Definition}: Given a Hankel matrix $H \in \mathbb{C}^{n \times n}$ and an approximate eigenpair $(\lambda, x)$ with $x \neq 0$, the \textbf{nearest nonnegative Hankel matrix} $\widehat H$ is defined as follows:
	
	\begin{itemize}
		\item \textbf{Stage A (Exact Eigenpair Feasible):} If there exists a perturbation $\Delta H$ such that $(H + \Delta H)x = \lambda x$ and $H + \Delta H \ge 0$ while preserving the Hankel structure, the nearest matrix is obtained by solving
		\[
		\widehat H = H + \Delta H, \quad
		\Delta H = \arg \min_{\Delta H} \|\Delta H\|_F \quad 
		\text{s.t. } \widehat H x = \lambda x, \quad \widehat H \ge 0, \quad \widehat H \text{ Hankel}.
		\]
		This represents the \emph{structured backward error} for the eigenpair $(\lambda, x)$.
		
		\item \textbf{Stage B (Exact Eigenpair Infeasible):} If no such $\Delta H$ exists, the nearest admissible matrix is defined as the solution to a constrained least-squares problem
		\[
		\widehat H = H + \Delta H, \quad
		\Delta H = \arg \min_{\Delta H} \| (H + \Delta H)x - \lambda x \|_2 \quad 
		\text{s.t. } \widehat H \ge 0, \quad \widehat H \text{ Hankel}.
		\]
		Here, $\widehat H$ is the nonnegative Hankel matrix that minimizes the eigenpair residual in Frobenius or Euclidean norm.
	\end{itemize}
    
	\begin{remark}
In control theory, signal processing, and moment problems, a nonnegative Hankel 
matrix corresponds to a realizable moment sequence. In practice, however, complex 
eigenvalues may arise due to noise or modeling errors. Consequently, one may 
encounter a corrupted or physically invalid Hankel structure. In such cases, it is 
desirable to compute a valid Hankel matrix that is closest (in the least--squares 
sense) to the perturbed data. Stage~B therefore provides the best physically 
realizable approximation.
\end{remark}

	With the preliminaries and notations in place, we now address the problem of computing the nearest nonnegative Hankel matrix that realizes a given eigenpair.

\section{Main theorem \& its proof}

\begin{theorem}
	Let $H\in\mathbb{R}^{n\times n}$ be a Hankel matrix, let
	$\lambda\in\mathbb{C}$ and $x\in\mathbb{C}^n$ be a nonzero vector.
	Let $S\in\{0,1\}^{n^2\times(2n-1)}$ (defined previously) be the Hankel structure 
matrix, and let $z\in\mathbb{R}^{2n-1}$ denote the vector of Hankel parameters so that 
$\operatorname{vec}(\Delta H)=S z$ parameterizes all Hankel perturbations.

	Define the residual
	\[
	r := \lambda x - Hx,
	\]
	and
	\[
	C := (I_n\otimes x^T) S \in\mathbb{C}^{n\times(2n-1)}.
	\]
	Consider the two optimization problems used by the algorithm:
	
	\begin{enumerate}
		\item[{\bf (1)}] Stage A: minimum-norm exact correction
		\[
		\begin{aligned}
			&\min_{z\in\mathbb{R}^{2n-1}} \; \|S z\|_2^2\\
			&\text{subject to}\quad C z = r,\qquad S z \ge -\operatorname{vec}(H).
		\end{aligned}
		\tag{A}
		\]
		
		\item[{\bf (2)}] Stage B: constrained least-squares residual minimization
		\[
		\begin{aligned}
			&\min_{z\in\mathbb{R}^{2n-1}} \; \|C z - r\|_2^2\\
			&\text{subject to}\quad S z \ge -\operatorname{vec}(H).
		\end{aligned}
		\tag{B}
		\]
	\end{enumerate}
	
	Then the following statements hold.
	
	\begin{itemize}
		\item[(i)] \textbf{Feasibility equivalence.} There exists a nonnegative Hankel matrix
		$\widehat H = H+\Delta H$ with $\widehat H x = \lambda x$ if and only if
		the linear feasibility system
		\[
		C z = r,\qquad S z \ge -\operatorname{vec}(H)
		\]
		is solvable. In that case any solution $z$ produces
		$\Delta H = \operatorname{vec}^{-1}(S z)$ and $\widehat H = H+\Delta H$ satisfies the constraints.
		
		\item[(ii)] \textbf{Stage A optimality.} If (A) is feasible then any global minimizer
		$z^\star$ of (A) yields a perturbation $\Delta H^\star=\operatorname{vec}^{-1}(S z^\star)$
		with the following properties:
		\begin{enumerate}
			\item $\widehat H = H + \Delta H^\star$ is nonnegative and satisfies $\widehat H x = \lambda x$.
			\item $\Delta H^\star$ has minimum Frobenius norm among all Hankel perturbations that
			produce a nonnegative $\widehat H$ with the same exact eigenpair, i.e.
			\[
			\|\Delta H^\star\|_F = \min\{ \|\Delta H\|_F : (H+\Delta H)x=\lambda x,\; H+\Delta H\ge 0,\; \Delta H\ \text{Hankel}\}.
			\]
			\item The problem (A) is a convex quadratic program; a global minimizer exists. Uniqueness of $z^\star$ (hence of $\Delta H^\star$) holds when the quadratic form $z\mapsto z^T(S^T S)z$ is strictly convex on the affine feasible set (for example if $S^T S$ is positive definite on the nullspace of the active equalities).
		\end{enumerate}
		
		\item[(iii)] \textbf{Stage B correctness and interpretation.} If (A) is infeasible, then (B) is solved by the algorithm and any minimizer $\tilde z$ of (B) yields
		\[
		\widetilde{\Delta H} = \operatorname{vec}^{-1}(S\tilde z),\qquad \widehat H = H + \widetilde{\Delta H},
		\]
		which is a nonnegative Hankel matrix that \emph{minimizes the eigenpair residual}
		\[
		\|\widehat H x - \lambda x\|_2 = \|C \tilde z - r\|_2
		\]
		among all nonnegative Hankel matrices. The optimization (B) is a convex quadratic program and therefore admits a global minimizer; uniqueness holds under the standard strict convexity condition on $C^T C$ restricted to the feasible set.
		
		\item[(iv)] \textbf{Relationship of Stage A and Stage B.} 
        If (A) is feasible, then any minimizer $z^\star$ of (A) attains zero
residual and therefore yields the smallest possible perturbation $\Delta H^\star$.
If (A) is infeasible, then (B) returns the best least--squares approximation to the
eigenpair under the Hankel and nonnegativity constraints, but its solution does not,
in general, minimize $\|\Delta H\|_F$.

	\end{itemize}
\end{theorem}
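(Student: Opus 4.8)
The plan is to reduce all four parts to a single linear-algebraic identity together with elementary facts about convex quadratic programs over polyhedra. The pivotal observation is that, writing $\operatorname{vec}(\Delta H)=Sz$, the matrix $C$ implements the map $z\mapsto\Delta H x$: from the Kronecker identities $\operatorname{vec}(\Delta H x)=(x^T\otimes I_n)\operatorname{vec}(\Delta H)$ and $(I_n\otimes x^T)\operatorname{vec}(M)=M^T x$, together with the symmetry of every Hankel matrix, one gets $Cz=(I_n\otimes x^T)Sz=\Delta H^T x=\Delta H x$. Given this, part (i) is immediate: using that $\operatorname{vec}(\Delta H)=Sz$ parameterizes exactly the Hankel matrices and that $\widehat H=H+\Delta H\ge 0$ is equivalent to $Sz\ge-\operatorname{vec}(H)$ (both recorded in the Preliminaries), the condition $(H+\Delta H)x=\lambda x$ reads $\Delta H x=\lambda x-Hx$, i.e.\ $Cz=r$ — a complex equality, equivalently the pair of real systems $\operatorname{Re}(C)z=\operatorname{Re}(r)$, $\operatorname{Im}(C)z=\operatorname{Im}(r)$, so that all problems below are genuine real programs. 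Hence a nonnegative Hankel realizer exists iff $\{Cz=r,\ Sz\ge-\operatorname{vec}(H)\}$ is solvable, and any such $z$ yields one. I will also record that the columns of $S$ have pairwise-disjoint nonempty supports, so $S$ has full column rank $2n-1$ and $z\mapsto\operatorname{vec}^{-1}(Sz)$ is a linear bijection from $\mathbb{R}^{2n-1}$ onto the real Hankel matrices satisfying $\|\operatorname{vec}^{-1}(Sz)\|_F=\|Sz\|_2$; this is the device that transfers optimality between the $z$-formulation and the $\Delta H$-formulation.

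For part (ii), item 1 is just part (i) applied at a feasible point of (A). For item 2, observe that $\widehat H=H+\Delta H\ge 0$ with $H$ real forces $\Delta H$ real, so the minimization in the statement is over real Hankel perturbations; under the bijection above this set is carried exactly onto the feasible set of (A) and the objective $\|\Delta H\|_F^2$ onto $\|Sz\|_2^2$, so the two minima coincide and are attained at corresponding points. For item 3, $z\mapsto\|Sz\|_2^2=z^T(S^TS)z$ is a convex quadratic and the feasible set is a polyhedron, so (A) is a convex QP; since the objective equals the squared distance from the origin to the set $\{Sz:Cz=r,\ Sz\ge-\operatorname{vec}(H)\}$, which is closed as a linear image of a polyhedron, the infimum is attained (equivalently, one may invoke the Frank--Wolfe theorem, using boundedness below by $0$). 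Strict convexity on the affine hull of the feasible set gives uniqueness; I will remark that here it holds unconditionally, since $S^TS$ is the diagonal matrix whose $k$th entry is the length of the $k$th anti-diagonal, hence positive definite.

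Part (iii) runs in parallel. The residual identity $\widehat H x-\lambda x=(H+\Delta H)x-\lambda x=\Delta H x-r=Cz-r$ shows that, along the bijection, (B) minimizes $\|\widehat H x-\lambda x\|_2$ over exactly the nonnegative Hankel matrices (its feasible set is nonempty — e.g.\ $z=-c$, where $\operatorname{vec}(H)=Sc$, gives $\widehat H=0$). The objective is again a squared distance to a closed polyhedral set, so a global minimizer exists; uniqueness requires strict convexity of $z\mapsto\|Cz-r\|_2^2$ on the feasible set, which, unlike in Stage A, may genuinely fail since $C$ has nontrivial nullspace whenever $n\ge 2$. Finally, for part (iv): if (A) is feasible, any minimizer $z^\star$ satisfies $Cz^\star=r$, hence zero residual, and by (ii) the corresponding $\Delta H^\star$ is a minimum-Frobenius-norm admissible exact correction; if (A) is infeasible, (B)'s objective does not involve $\|Sz\|_2$ at all, and (as just noted) its solution set is generically a positive-dimensional polyhedron on which $\|Sz\|_2$ is non-constant, so the matrix returned by (B) need not have minimal Frobenius norm — pinning that down would require a secondary minimization. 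I will make the failure concrete with a one-line example, e.g.\ $H\ge 0$ already but $(\lambda,x)$ not realizable by any nonnegative Hankel matrix, where the nonnegative Hankel matrix of minimal Frobenius norm is $H$ itself ($\Delta H=0$), which is not a residual minimizer.

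The argument is essentially bookkeeping, so I do not anticipate a real obstacle; the two points that require attention rather than routine checking are (a) the identification $Cz=\Delta H x$, which genuinely uses the symmetry of Hankel matrices (the bare Kronecker identity instead produces $(x^T\otimes I_n)S$, as in the Preliminaries), and (b) the existence of minimizers, since a convex quadratic need not attain its infimum on an unbounded polyhedron — one must explicitly use the closed-image/squared-distance argument (or the Frank--Wolfe theorem) rather than a naive compactness appeal.
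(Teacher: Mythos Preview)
Your proposal is correct and follows essentially the same route as the paper: the identity $Cz=\Delta H\,x$, the correspondence $\|\Delta H\|_F=\|Sz\|_2$, and convexity of the two quadratic programs over polyhedra. Your treatment is in fact tighter than the paper's on two points worth recording: you correctly observe that $(I_n\otimes x^T)\operatorname{vec}(\Delta H)=\Delta H^T x$, so the definition of $C$ in the theorem (as opposed to the $(x^T\otimes I_n)S$ used in the Preliminaries) silently relies on the symmetry of Hankel matrices; and you note that $S^TS$ is the diagonal matrix of anti-diagonal lengths, hence positive definite, so the uniqueness clause in Stage~A actually holds unconditionally rather than under the hypothesis stated. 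Your invocation of Frank--Wolfe (or the closed-polyhedral-image argument) for existence is also the right justification, whereas the paper's ``convex QP with closed feasible set'' is not by itself sufficient.
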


\begin{proof}
	Most statements follow from straightforward algebra and convexity arguments; we give the concise proof.
	
	\medskip\noindent\textbf{(i) Feasibility equivalence.}
	Suppose first that there exists a nonnegative Hankel matrix $\widehat H=H+\Delta H$
	with $\widehat H x=\lambda x$. Since $\Delta H$ is Hankel there exists $z$ with
	$\operatorname{vec}(\Delta H)=S z$, and then
	\[
	(\Delta H)x = (I_n\otimes x^T)\operatorname{vec}(\Delta H) = (I_n\otimes x^T) S z = C z.
	\]
	From $\widehat H x=\lambda x$ we obtain $(\Delta H)x = \lambda x - Hx = r$, hence
	$C z = r$. Nonnegativity $\widehat H \ge 0$ is equivalent to $\operatorname{vec}(H)+S z\ge 0$,
	i.e. $S z \ge -\operatorname{vec}(H)$. Thus $z$ satisfies the feasibility system.
	
	Conversely, if a vector $z$ satisfies $C z=r$ and $S z\ge -\operatorname{vec}(H)$,
	then defining $\Delta H=\operatorname{vec}^{-1}(S z)$ yields a Hankel perturbation
	for which $\widehat H=H+\Delta H$ is nonnegative and satisfies $\widehat H x=\lambda x$.
	
	\medskip\noindent\textbf{(ii) Stage A optimality.}

Let $z^\star$ be any global minimizer of \textup{(A)}, and define
\[
\Delta H^\star := \operatorname{vec}^{-1}(S z^\star),
\qquad
\widehat H := H + \Delta H^\star.
\]

\emph{(1) Feasibility properties.}
Because $z^\star$ satisfies all constraints of (A), we first note that
\[
C z^\star = r = \lambda x - Hx.
\]
Using the identity
\[
C = (I_n \otimes x^T) S,
\]
we have
\[
C z^\star
=
(I_n \otimes x^T)\operatorname{vec}(\Delta H^\star)
=
\Delta H^\star x.
\]
Hence
\[
\Delta H^\star x = \lambda x - Hx,
\qquad\Longrightarrow\qquad
(H+\Delta H^\star)x = \lambda x.
\]
Thus the exact eigenpair condition is satisfied.

Next, the componentwise inequality constraint in (A),
\[
S z^\star \ge -\operatorname{vec}(H),
\]
implies
\[
\operatorname{vec}(\widehat H)
= 
\operatorname{vec}(H) + S z^\star
\ge 0,
\]
and therefore
\[
\widehat H = H + \Delta H^\star \ge 0.
\]
This proves that $\widehat H$ satisfies both the nonnegativity constraint
and the exact eigenpair constraint.

\medskip
\emph{(2) Minimal Frobenius-norm property.}
Since every Hankel perturbation is parameterized as
\(
\Delta H = \operatorname{vec}^{-1}(S z),
\)
the Frobenius norm satisfies
\[
\|\Delta H\|_F = \|\operatorname{vec}(\Delta H)\|_2 = \|S z\|_2.
\]
The objective function of (A) is precisely
\[
\min_{z}\;\|S z\|_2^2
=
\min_{\Delta H}\;\|\Delta H\|_F^2,
\]
where the minimization is restricted to Hankel perturbations $\Delta H$
that satisfy
\[
(H+\Delta H)x=\lambda x,
\qquad 
H+\Delta H \ge 0.
\]

Thus the feasible set of (A) is exactly the set of all Hankel perturbations
that produce a nonnegative matrix $\widehat H$ sharing the exact eigenpair
$(\lambda,x)$.  
Since $z^\star$ minimizes $\|S z\|_2$ over this set, the corresponding
$\Delta H^\star$ satisfies
\[
\|\Delta H^\star\|_F
=
\min\left\{
\|\Delta H\|_F \;:\;
(H+\Delta H)x = \lambda x,\;
H+\Delta H \ge 0,\;
\Delta H\ \text{Hankel}
\right\}.
\]

\medskip
\emph{(3) Existence and uniqueness.}
The objective function of (A) is
\[
f(z) = \tfrac{1}{2} z^T (S^T S) z,
\]
which is a convex quadratic form because $S^T S$ is positive semidefinite.
All constraints of (A) are affine.  
Hence (A) is a convex quadratic program, and therefore admits a global
minimizer.

For \emph{Uniqueness},
we assume that the quadratic form $z \mapsto z^\top(S^\top S)z$ is strictly
convex on the affine feasible set of (A), or equivalently, that
$S^\top S$ is positive definite on the nullspace of the active equality
constraints.

Let $z_1$ and $z_2$ be two minimizers of (A).  
Since the constraints are affine, the feasible region is convex.
Define $d := z_2 - z_1$.  
Because $Cz_1 = r = Cz_2$, we have
\[
C d = 0,
\]
so $d$ lies in the nullspace of the active equalities.

Consider the objective along the line segment 
$z(t) = z_1 + t d$ for $t\in[0,1]$.
A direct expansion gives
\[
f(z(t)) 
= \tfrac12 (z_1 + t d)^\top (S^\top S)(z_1 + t d)
= f(z_1) + t\, d^\top(S^\top S)z_1 
  + \tfrac12 t^2\, d^\top (S^\top S)d.
\]
Since both $z_1$ and $z_2$ are minimizers, the derivative at $t=0$
must vanish:
\[
\left.\tfrac{d}{dt} f(z(t))\right|_{t=0}
= d^\top (S^\top S) z_1 = 0.
\]
Hence
\[
f(z(t)) - f(z_1)
= \tfrac12 t^2\, d^\top (S^\top S)d.
\]

If $d\neq 0$, then $d^\top(S^\top S)d > 0$ by positive definiteness of
$S^\top S$ on $\ker(C)$, implying that $f(z(t)) > f(z_1)$ for all
$t\in(0,1]$, contradicting the assumption that $z_2$ is also a
minimizer.  
Therefore $d = 0$, and thus $z_1 = z_2$.

Consequently, the minimizer $z^\star$ of (A) is unique, and so is the
corresponding perturbation $\Delta H^\star = \operatorname{vec}^{-1}(S
z^\star)$.

\hfill$\square$
	
	\medskip\noindent\textbf{(iii) Stage B correctness.}
	If (A) is infeasible, no $z$ satisfies $Cz=r$ together with $Sz\ge -\operatorname{vec}(H)$.
	The algorithm then solves (B), whose objective is $\|C z - r\|_2^2 = \|(H+\operatorname{vec}^{-1}(Sz))x - \lambda x\|_2^2$,
	i.e. the squared eigenpair residual of the resulting matrix. Again (B) is a convex QP
	with closed feasible set, so a global minimizer $\tilde z$ exists. By construction
	$\tilde z$ yields a Hankel perturbation $S\tilde z$ satisfying the nonnegativity
	constraint and minimizing the eigenpair residual over the feasible set. Uniqueness
	follows under the same strict convexity conditions on $C^T C$ restricted to the feasible set.
	
	\medskip\noindent\textbf{(iv) Relationship.}
	If (A) is feasible, then there exists $z$ such that
\[
Cz = r.
\]
For such $z$, the residual in (A) satisfies
\[
\|Cz - r\|_2 = 0,
\]
which is the minimum possible value of the objective function in (A).
Thus any minimizer $z^\star$ of (A) satisfies
\[
Cz^\star = r,
\]
so Stage~A returns a perturbation $\Delta H^\star$ that enforces the
eigenpair constraints exactly.  
Because (A) also minimizes the quadratic form
\[
\|\Delta H\|_F^2 = \|Sz\|_2^2,
\]
over all feasible $z$, the perturbation $\Delta H^\star$ has minimum Frobenius norm
among all perturbations that produce an exact eigenpair. Hence Stage~A is both
\emph{exact} and \emph{norm‐minimal}.

If (A) is infeasible, then no $z$ satisfies $Cz=r$ under the Hankel and
nonnegativity constraints. In this case, Stage~B solves the least-squares problem
\[
\min_{z \in \mathcal{F}} \|Cz - r\|_2^2,
\]
where $\mathcal{F}$ denotes the feasible set defined by the constraints of (B).
Stage~B therefore returns the perturbation that gives the smallest possible
residual under these constraints. However, because (B) does not minimize
$\|Sz\|_2$ (the Frobenius norm of $\Delta H$), its solution does not necessarily
minimize $\|\Delta H\|_F$.

	This completes the proof.
\end{proof}

	Building on the theoretical results established in the main theorem, we now present a practical algorithm to compute the nearest nonnegative Hankel matrix for a given eigenpair.
	\newpage 
\begin{algorithm}
	\caption{Two--Stage Algorithm for Nearest Nonnegative Hankel Matrix with Prescribed Eigenpair}
	\label{Algo:TwoStage}
	\begin{algorithmic}[1]
		
		\Require Hankel matrix $H\in\mathbb{R}^{n\times n}$, target eigenpair $(\lambda, x)$
		\Ensure Corrected Hankel matrix $\widehat H$ and perturbation $\Delta H$
		
		\State Construct structure matrix $S$ such that $\operatorname{vec}(\Delta H)=Sz$
		\State Define nonnegativity constraint: $Sz \ge -\operatorname{vec}(H)$
		
		\State Compute eigenpair residual $r = \lambda x - Hx$
		\State Form $C = (I_n \otimes x^T) S$
		
		% ---------- Stage A ----------
		\Statex \vspace{0.15cm}
		\Statex {\bf Stage A (Exact Feasible Minimum--Norm Correction)}
		
		\State Attempt to solve the feasibility system
		\[
		C z = r,\qquad Sz \ge -\operatorname{vec}(H)
		\]
		(using linear/least--norm programming)
		
		\If{a feasible $z_A$ exists}
		\State $\Delta H = \operatorname{vec}^{-1}(S z_A)$
		\State $\widehat H = H + \Delta H$
		\State \Return $\widehat H,\, \Delta H$
		\EndIf
		
		% ---------- Stage B ----------
		\Statex \vspace{0.15cm}
		\Statex {\bf Stage B (Constrained Least--Squares Residual Minimization)}
		
		\State Solve the quadratic program
		\[
		z_B = \arg\min_z \| C z - r \|_2 
		\quad\text{s.t.}\quad Sz \ge -\operatorname{vec}(H)
		\]
		\State $\Delta H = \operatorname{vec}^{-1}(S z_B)$
		\State $\widehat H = H + \Delta H$
		
		\State \Return $\widehat H,\, \Delta H$
		
	\end{algorithmic}
\end{algorithm}

Next, we present numerical experiments that demonstrate the performance of the proposed algorithm, covering both exact and inexact eigenpair cases. We also include several graphical results that clearly illustrate the numerical behaviour of our algorithm, demonstrating its ability to distinguish between feasible and infeasible eigenpairs, its stability through residual clustering, and its computational performance across different matrix sizes.

%	The following MATLAB function implements the theorem and computes either an exact feasible nonnegative Hankel matrix or the nearest one.
	
%	\begin{verbatim}
%		function out = hankel_eigenpair_nonneg(H, lambda, x)
%		% hankel_eigenpair_nonneg  Find a nonnegative Hankel matrix Hhat so that
%		%   Hhat * x = lambda * x  (if feasible), otherwise compute the nearest
%		%   nonnegative Hankel matrix minimizing ||DeltaH||_F.
%		
%		n = size(H,1);
%		m = 2*n - 1;
%		
%		% Build structure matrix S
%		S = zeros(n*n, m);
%		for diagIndex = 1:m
%		for i = 1:n
%		j = diagIndex + 1 - i;
%		if j>=1 && j<=n
%		S((j-1)*n + i, diagIndex) = 1;
%		end
%		end
%		end
%		
%		r = lambda*x - H*x;  % residual
%		
%		K = kron(eye(n), x.');
%		C = K * S;
%		C_real = [real(C); imag(C)];
%		d_real = [real(r); imag(r)];
%		
%		Aineq = -S;
%		bineq = H(:);
%		
%		optsLP = optimoptions('linprog','Display','none');
%		f0 = zeros(m,1);
%		[z_feas,~,exitflag] = linprog(f0, Aineq, bineq, C_real, d_real, [], [], [], optsLP);
%		
%		out = struct();
%		if exitflag == 1
%		z = z_feas;
%		out.feasible = true;
%		else
%		out.feasible = false;
%		Hqp = S'*S;
%		fqp = zeros(m,1);
%		optsQP = optimoptions('quadprog','Display','none');
%		[z_min,~,exitflag2] = quadprog(Hqp, fqp, Aineq, bineq, [], [], [], [], [], optsQP);
%		if exitflag2 <= 0
%		error('quadprog failed');
%		end
%		z = z_min;
%		end
%		
%		DeltaH_vec = S*z;
%		DeltaH = reshape(DeltaH_vec, n, n);
%		Hhat = H + DeltaH;
%		Hhat(Hhat<0 & Hhat>-1e-12) = 0;
%		Hhat(abs(Hhat)<1e-14) = 0;
%		
%		out.Hhat = Hhat;
%		out.DeltaH = DeltaH;
%		out.z = z;
%		out.resnorm = norm(lambda*x - Hhat*x);
%		end
%	\end{verbatim} 

\section{Numerical Experiments and Performance Analysis}

\subsection*{Example-1: Exact Nonnegative Hankel Matrix Exists}

Let the random Hankel matrix $H$ and its approximate eigenpair $(\lambda,x)$ be given as follows: 
\[
H =
\begin{bmatrix}
	0.0903 & -1.4229 &  0.0694 & -0.0928 &  1.4677 \\
	-1.4229 &  0.0694 & -0.0928 &  1.4677 &  1.3898 \\
	0.0694 & -0.0928 &  1.4677 &  1.3898 & -0.3613 \\
	-0.0928 &  1.4677 &  1.3898 & -0.3613 & -0.5182 \\
	1.4677 &  1.3898 & -0.3613 & -0.5182 &  0.1594
\end{bmatrix}
\]

\[
\lambda = 1.301415,\quad
x = \begin{bmatrix}
	-0.2392 &
	-0.5470 &
	-0.7013 &
	-0.3812 &
	\;\;0.0806
\end{bmatrix}^T.
\]
%x = 
%\begin{bmatrix}1 & 1 & 1 & 1 & 1\end{bmatrix}^T.
%\]

By using Algorithm~\ref{Algo:TwoStage},we get

	\[
	\widehat{H} =
	\begin{bmatrix}
		0.7482 & 0      & 0      & 0.5342 & 0.8847 \\
		0      & 0      & 0.5342 & 0.8847 & 0      \\
		0      & 0.5342 & 0.8847 & 0      & 0      \\
		0.5342 & 0.8847 & 0      & 0      & 1.4354 \\
		0.8847 & 0      & 0      & 1.4354 & 10.7154
	\end{bmatrix}.
	\]
	
 Further, the minimum Frobenius norm of the perturbation is
\[
\|\Delta H\|_F = 1.161675 \times 10^{1}.
\]
Clearly, 
\[
\| \widehat{H}x - \lambda x \|_2 = 6.976153 \times 10^{-15}.
\]

This example illustrates that the algorithm can successfully produce a matrix nearest to the given Hankel matrix with prescribed eigenpair while preserving Hankel structure and nonnegativity.

\subsection*{Example-2: No Exact Nonnegative Hankel Matrix}

Let the random Hankel matrix $H$ and its approximate eigenpair $(\lambda,x)$ be given as follows: 
\[
H =
\begin{bmatrix}
	-1.1923 &  1.2917 &  0.3320 & -0.6178 &  0.4433 \\
	1.2917 &  0.3320 & -0.6178 &  0.4433 & -0.8108 \\
	0.3320 & -0.6178 &  0.4433 & -0.8108 & -0.0862 \\
	-0.6178 &  0.4433 & -0.8108 & -0.0862 &  1.7344 \\
	0.4433 & -0.8108 & -0.0862 &  1.7344 &  0.6217
\end{bmatrix}.
\]

\[
\lambda = 0.152982, \quad
x =
\begin{bmatrix}
	0.7298 &
	0.2273 &
	0.6387 &
	-0.0018 &
	0.0881
\end{bmatrix}^T.
\]

Algorithm~\ref{Algo:TwoStage} tells that exact nonnegative Hankel matrix does not exist. In this case the algorithm computes $\widehat H$ such that $\| \widehat H x - \lambda x \|_2$ is minimum:
\[
\widehat H =
\begin{bmatrix}
	0       & 0.2849 & 0.0746 & 0      & 0      \\
	0.2849  & 0.0746 & 0      & 0      & 0      \\
	0.0746  & 0      & 0      & 0      & 0.1552 \\
	0       & 0      & 0      & 0.1552 & 0      \\
	0       & 0      & 0.1552 & 0      & 0
\end{bmatrix}.
\]
The Frobenius norm of the perturbation is
	\[
	\|\Delta H\|_{F} = 3.919675.
	\]
	Clearly, it can be seen that 
	\[
	\| \widehat H x - \lambda x \|_2 = 2.106211 \times 10^{-1}.
	\]

This example illustrates that while an exact nonnegative Hankel matrix with the prescribed eigenpair does not exist, the algorithm successfully computes $\widehat H$ such that $\| \widehat H x - \lambda x \|_2$ is minimum and  and $\widehat H$ preserving Hankel structure and nonnegativity.

\begin{remark}
The Frobenius norm $\|\Delta H\|_F$ measures the distance from the given matrix $H$
to the nearest nonnegative Hankel matrix that best satisfies the prescribed eigenpair.
In Example~1, an exact feasible solution exists, so $\widehat H = H + \Delta H$ satisfies
$(H+\Delta H)x = \lambda x$ and $\|\Delta H\|_F$ is the minimum possible among all such
nonnegative Hankel matrices.
In Example~2, no exact solution exists; hence Stage~B of Algorithm~\ref{Algo:TwoStage} computes a matrix $\widehat H$
that minimizes the residual $\|\widehat H x - \lambda x\|_2$ while keeping $\Delta H$
as small as allowed under the Hankel and nonnegativity constraints.
\end{remark}
 \subsection{Performance Analysis of Algorithm}

To further investigate the computational behavior of our algorithm, we conducted experiments on matrices of varying sizes $n = 10, 20, 30, \dots, 300 $. For each matrix size, we performed 10 trials by selecting arbitrary candidate eigenpairs and evaluating their feasibility. We recorded the residual norms and CPU times for each trial to analyze both convergence behavior and computational cost. 
Figure~\ref{fig:stagecompare} shows that infeasibility appears across all tested matrix sizes. 
For each dimension $n$, among the randomly generated eigenpairs, there always exists at least one eigenpair for which no nonnegative Hankel matrix realizes it as an exact eigenpair. 
Therefore, infeasibility is not dependent on the matrix size; rather, for almost every size $n$, one can find eigenpairs that are numerically incompatible with the nonnegative Hankel structure.
%Figures~\ref{fig:residual_distribution} and~\ref{fig:residual_vs_cputime} summarize these results.

\begin{figure}[htbp]
    \centering
    \includegraphics[width=.85\textwidth]{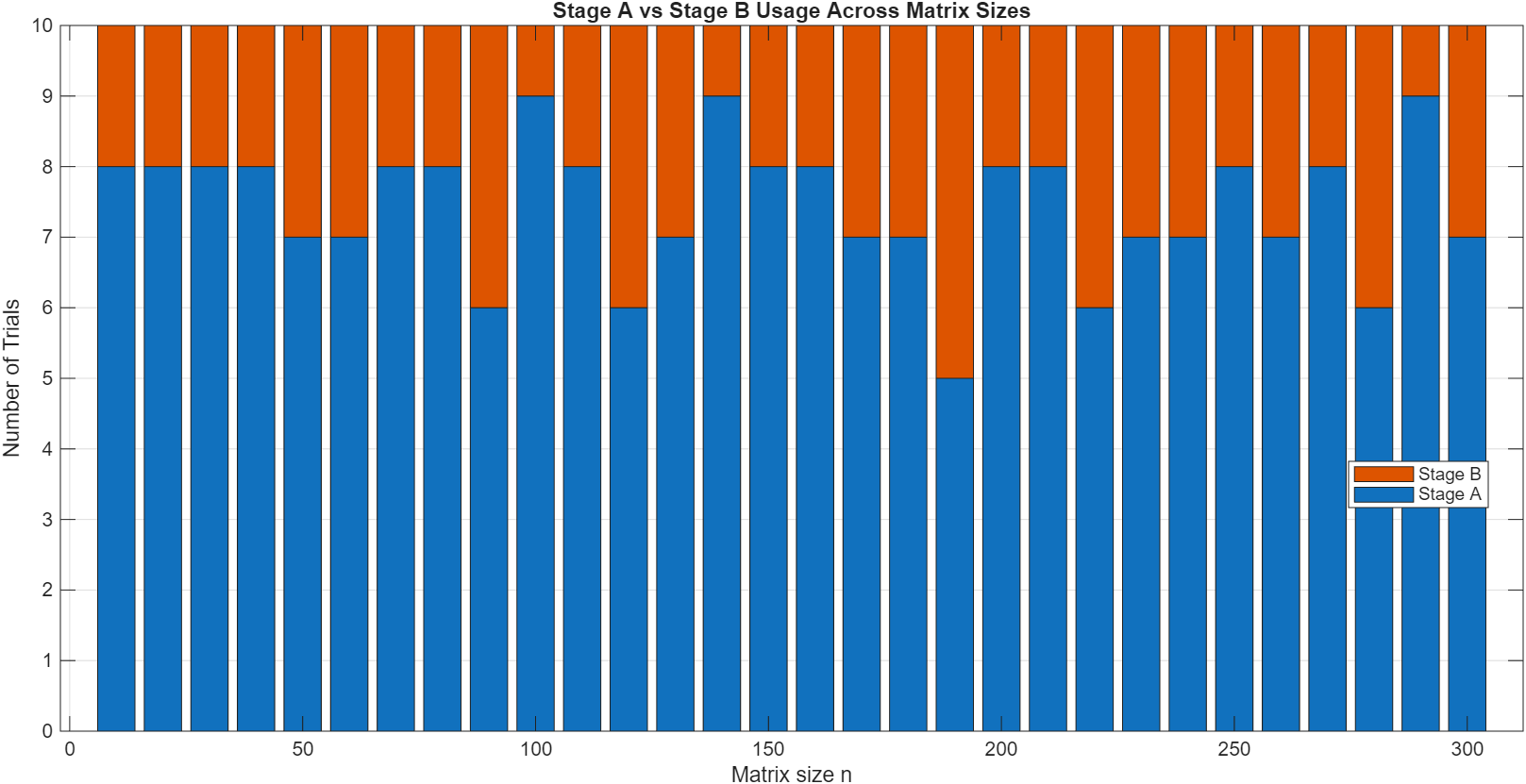}
    \caption{Stage A vs Stage B usage across matrix sizes. Blue denotes exact feasible cases; orange denotes infeasible cases requiring Stage B optimization.}
    \label{fig:stagecompare}
\end{figure}

%Figure~\ref{fig:residual_distribution} shows the distribution of residuals across all trials. Two distinct clusters emerge: small residuals corresponding to feasible eigenpairs, and large residuals representing infeasible ones. 

Next, figure~\ref{fig:residual_distribution} displays the scatter plot of the eigenpair
residual versus the CPU time over all matrix sizes and all random trials.
Two clearly separated clusters are visible. The first cluster, located in the
lower-left corner, contains residuals between $10^{-16}$ and $10^{-8}$ with CPU
times below $2$ seconds. These points correspond to cases where the prescribed
eigenpair is feasible, i.e., Stage~A succeeds in finding a nonnegative Hankel
matrix satisfying the eigenpair relation exactly. The second cluster, appearing
in the upper-right region, contains residuals between $10^{0}$ and $10^{1}$ and
CPU times ranging from $3$ to $25$ seconds. These trials represent infeasible
eigenpairs for which Stage~A fails and the algorithm switches to Stage~B,
producing only an approximate solution with significantly larger residual and
computational cost. A small transition region lies between the two clusters.
Overall, the plot shows that residual magnitude and CPU time jointly diagnose
feasibility: feasible cases are solved quickly with machine-precision
residuals, whereas infeasible cases are consistently slower and yield large
residuals.

%Figure~\ref{fig:residual_vs_cputime} plots CPU time versus residual norm, highlighting that trials with small residuals converge rapidly, whereas infeasible trials require significantly more computational effort. These observations indicate that the feasibility of the chosen eigenpair strongly influences both convergence and time complexity, with larger residuals associated with higher computational cost.

\begin{figure}[!ht]
    \centering
    \includegraphics[width=0.85\textwidth]
    {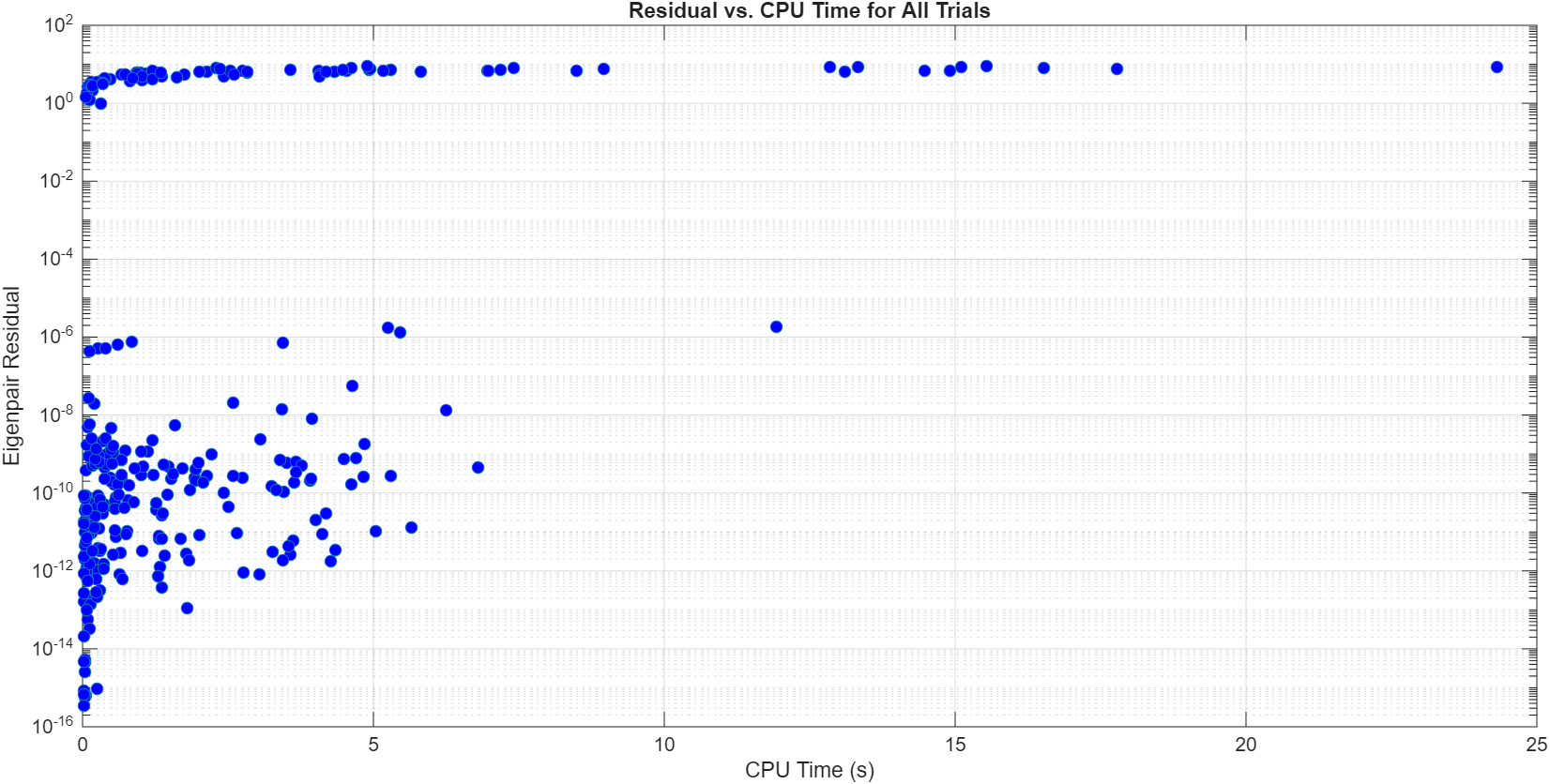}
    \caption{Residual distribution across all trials. Two distinct clusters appear:  
    (i) very small residuals ($10^{-16}$–$10^{-8}$) corresponding to feasible eigenpairs (Stage A),  
    (ii) large residuals ($\approx 1$–$10$) corresponding to infeasible eigenpairs requiring approximation (Stage B).}
    \label{fig:residual_distribution}
\end{figure}

%\begin{figure}[!ht]
%    \centering
 %   \includegraphics[width=0.85\textwidth]{Figure1_residual_distribution.png}
  %  \caption{Scatter plot of CPU time versus residual norm. Trials with small residuals (feasible cases) converge rapidly, while large residuals (infeasible cases) result in significantly higher CPU times. This confirms that infeasibility increases computational effort.}
   % \label{fig:residual_vs_cputime}
%\end{figure}

Finally, Figure~\ref{fig:cputime_vs_size} illustrates how the average CPU time of the proposed 
Hankel eigenpair algorithm varies with the matrix size~$n$. As expected, the overall trend shows 
an increase in computational effort for larger matrices, since higher dimensions lead to more 
constraints and a more expensive quadratic program for \texttt{quadprog}.
However, the curve is not monotonic. The fluctuations arise because some trials terminate in 
\emph{Stage~A}, which is very fast, while others proceed to the more computationally demanding 
\emph{Stage~B}. 
A particularly important observation occurs at \textbf{$n = 250$}; despite the general upward 
trend in CPU time, the \emph{average time decreases sharply} at this point. This behaviour directly 
supports our theoretical understanding that is for certain matrix sizes the feasibility 
conditions are more frequently satisfied in \emph{Stage~A}, causing many trials to terminate early. 
As a result, the average time does \emph{not} increase with~$n$ at $n=250$ (and for many other different matrix sizes); instead, it decreases, 
producing a noticeable dip in the curve.
Thus, this irregularity is not an anomaly but a consequence of the algorithmic structure: the 
relative frequency of Stage~A versus Stage~B completions strongly influences the observed CPU times.

\begin{figure}[!ht]
    \centering
    \includegraphics[width=0.85\textwidth]{
    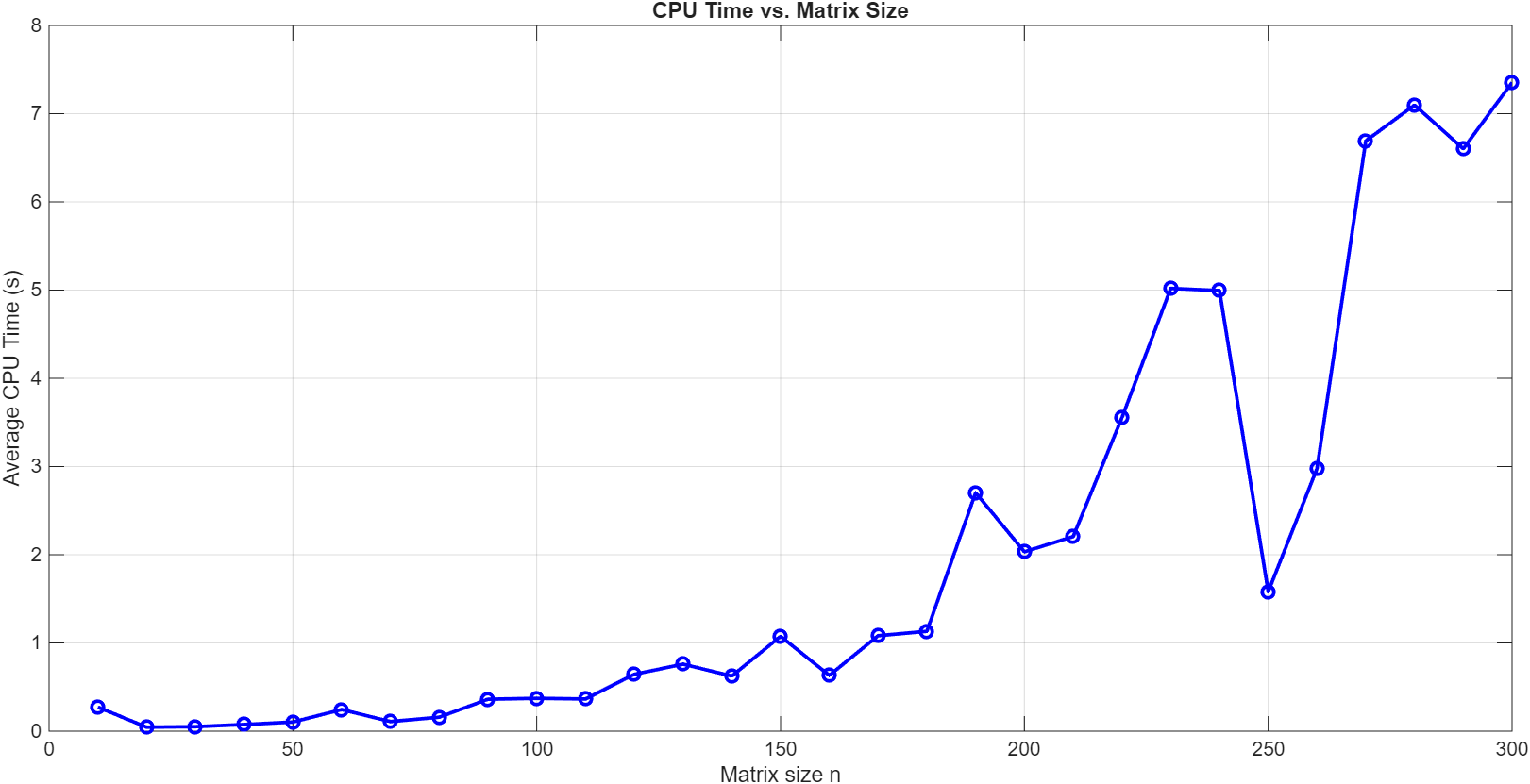}
    \caption{CPU time as a function of matrix size $n$. The overall growth reflects increased SDP complexity. Occasional spikes correspond to infeasible eigenpairs where Stage B dominates, confirming that infeasibility significantly increases the computational burden.}
    \label{fig:cputime_vs_size}
\end{figure}

\section*{Conclusion}

In this work, we studied the problem of computing minimal Frobenius-norm perturbations to a Hankel matrix so that a prescribed eigenpair is realized while preserving entrywise nonnegativity. We formalized the notion of a \emph{nearest nonnegative Hankel matrix} and highlighted the interplay between the linear Hankel structure and the convex nonnegativity constraints, which makes the problem nontrivial.

We developed a fully numerical framework that handles both feasible and infeasible cases. If the feasibility region is nonempty, the method computes the exact structured backward error via the minimum-norm perturbation. However, when no admissible perturbation satisfies the imposed structure, the framework switches to a constrained least-squares formulation to identify the closest matrix that minimizes the corresponding eigenpair residual. The approach leverages modern convex optimization techniques, including linear and quadratic programming, and uses the Hankel structure matrix to efficiently parameterize admissible perturbations.

The main contribution of this article is to provide a unified theoretical and computational framework for assessing eigenpair sensitivity under nonnegativity-preserving Hankel perturbations. Moreover, it also gives the explicit characterization of feasible perturbations and practical algorithms for both exact and approximate settings. Numerical experiments demonstrate the effectiveness of the proposed approach, illustrating both cases where the prescribed eigenpair is exactly realizable and cases where it is not, along with graphical analyses of perturbation norms, eigenpair residuals, and computational performance across different matrix sizes.

The work in this paper provides a concrete, implementable method for structured eigenpair analysis in the case of Hankel matrices with positivity constraints and hence bridges a gap between structured inverse eigenvalue problems, nonnegative matrix analysis, and practical computational methods.

\section*{Authorship contribution statement}

{\bf Prince Kanhya:} Conceptualization, Methodology, Formal analysis, Investigation, Validation, Writing-original draft, Writing-review \&\ editing, Supervision.\\
{\bf Udit Raj:} Methodology, Formal analysis, Validation, Writing-original draft, Writing-review \&\ editing.

\section*{Funding}
{\bf Prince Kanhya} acknowledges financial support from the Research and Development (R\&D) Department, Indian Institute of Technology (IIT) Guwahati, Assam, India.

\section*{AI-Assistance Disclosure}
The authors used an artificial intelligence language model (ChatGPT, developed by OpenAI) for rephrasing, language polishing, and basic grammatical correction of certain sentences. All mathematical results, algorithms, proofs, and scientific conclusions presented in this manuscript are entirely authored, verified, and approved by the authors. 

\section*{Declaration of Competing Interest}
No potential conflict of interest was reported by the authors.


\begin{thebibliography}{99}

%\bibitem{Chu2002}
%M. T. Chu,
%\newblock {\em Structured Inverse Eigenvalue Problems},
%\newblock Technical Report, North Carolina State University, 2002.

\bibitem{ChuGolub2005}
M.~Chu and G.~Golub,
\newblock \emph{Inverse Eigenvalue Problems: Theory, Algorithms, and Applications},
\newblock Oxford University Press, 2005.


\bibitem{Park1999}
H. Park, L. Zhang, and J. B. Rosen,
\newblock Low rank approximation of a Hankel matrix by structured total least norm,
\newblock {\em BIT}, 39(4):757--779, 1999.
%\newblock \url{https://doi.org/10.1023/A:1022347425533}

\bibitem{AlHomidan2007}
S. Al-Homidan,
\newblock Solving Hankel matrix approximation problems using semidefinite programming,
\newblock {\em Journal of Computational and Applied Mathematics}, 202(2):304--314, 2007.
\newblock %\url{https://doi.org/10.1016/j.cam.2006.06.016}

\bibitem{Knirsch2020}
H. Knirsch, M. Petz, and G. Plonka,
\newblock Optimal rank‑1 Hankel approximation of matrices: Frobenius norm and spectral norm and Cadzow's algorithm,
\newblock {\em Linear Algebra and its Applications}, 629:1--39, 2021.

\bibitem{AhmadKanhya2021}
S. S. Ahmad and P. Kanhya,
\newblock Backward error analysis and inverse eigenvalue problems for Hankel and symmetric-Toeplitz structures,
\newblock {\em Applied Mathematics and Computation}, 406:126288, 2021.

\bibitem{Cifuentes2019}
D. Cifuentes,
\newblock A convex relaxation to compute the nearest structured rank‑deficient matrix,
\newblock {\em SIAM Journal on Matrix Analysis and Applications,} 42(2):708--729, 2021.

%{\color{red}
%\bibitem{Johnson2017}
%C. R. Johnson,
%\newblock The nonnegative inverse eigenvalue problem (NIEP): A survey,
%\newblock {\em arXiv preprint arXiv:1706.01342}, 2017.
%\newblock \url{https://arxiv.org/abs/1706.01342}}
\bibitem{RanTeng2024}
A.~C.~M.~Ran and Z.~Teng,
\newblock The nonnegative inverse eigenvalue problem with prescribed zero patterns in dimension three,
\newblock {\em Electronic Journal of Linear Algebra},~40:~506--537, 2024.

\bibitem{Guglielmi2018}
N. Guglielmi, and V. Y. Protasov,
\newblock On the closest stable/unstable nonnegative matrix and related stability radii,
\newblock {\em SIAM Journal on Matrix Analysis and Applications}, 39(4):1642--1669, 2018. 

%{\color{red}\bibitem{AlHomidan}
%S. Al-Homidan,
%\newblock Structured methods for solving Hankel matrix approximation problems,
%\newblock Technical report / research article, 2000s.}

\bibitem{AlHomidan2003}
S.~Al-Homidan,
\newblock Hybrid methods for approximating Hankel matrix,
\newblock {\em Numerical Algorithms}, ~32(1):~57--66, 2003.



\bibitem{AlHomidan2021}
S.~Al-Homidan,
\newblock Computing the nearest circulant positive semi-definite matrix to a noisy matrix,
\newblock {\em Journal of Nonlinear and Convex Analysis,}~22(2):~309--319, 2021.




\bibitem{Dopico2019}
F.~M.~Dopico, J.~Pérez, and P.~Van Dooren,
\newblock Structured backward error analysis of linearized structured polynomial eigenvalue problems,
\newblock {\em Mathematics of Computation,},~88(317):~1189--1228, 2019.


%\bibitem{Ong2020}
%H. Ong and P. Van Dooren,
%\newblock Structured backward error analysis for large-scale structured eigenproblems,
%\newblock {\em Journal of Computational and Applied Mathematics}, 370:112661, 2020.
%{\color{blue}
%\bibitem{Sun2022}
%X. Sun and L. Zhang,
%\newblock Convex optimization methods for cone-constrained structured matrix problems,
%\newblock {\em SIAM Journal on Matrix Analysis and Applications}, 43(1):123--147, 2022.

\bibitem{ShanWei2025}
J.~Shan and Y.~Wei,
\newblock Optimal backward error of a total least squares and its randomized algorithms,
\newblock {\em SIAM Journal on Matrix Analysis and Applications,}~46(3):~2116--2139, 2025.


%\bibitem{Miao2023}
%Y. Miao and J. Xu,
%\newblock Least‑squares algorithms for structured eigenvalue perturbation analysis,
%\newblock {\em Linear Algebra and its Applications}, ---:---, 2023.} % fill volume/page manually
\bibitem{AhmadKanhya2020}
S.~S.~Ahmad and P.~Kanhya,
\newblock Structured perturbation analysis of sparse matrix pencils with s-specified eigenpairs,
\newblock {\em Linear Algebra and its Applications}, ~602: 93--119, 2020.

%{\color{red}\bibitem{Boffi2024}
%D. Boffi and F. Guglielmi,
%\newblock Eigenpair sensitivity analysis for positive semidefinite Hankel matrices,
%\newblock {\em Applied Numerical Mathematics}, ---:---, 2024.} % fill volume/page manually
\bibitem{AhmadKanhya20211}
S.~S.~Ahmad and P.~Kanhya,
\newblock Backward error analysis and inverse eigenvalue problems for Hankel and symmetric-Toeplitz structures,
\newblock {\em Applied Mathematics and Computation,}~406:~126288, 15, 2021.


\bibitem{Beckermann2007}
B.~Beckermann, G.~H.~Golub, and G.~Labahn,
\newblock On the numerical condition of a generalized Hankel eigenvalue problem,
\newblock {\em Numerische Mathematik,}~106(1):~41--68, 2007.


\bibitem{AlHomidan2005}
S.~Al-Homidan,
\newblock Structured methods for solving {H}ankel matrix approximation problems,
\newblock {\em Pacific Journal of Optimization},~1(3):~599--609, 2005.

\bibitem{Fazzi2021}
A.~Fazzi, N.~Guglielmi, and I.~Markovsky,
\newblock A gradient system approach for Hankel structured low-rank approximation,
\newblock {\em Linear Algebra and its Applications,}~623:~236--257, 2021.

\end{thebibliography}
\end{document}